\numberwithin{equation}{section}
\newtheorem{theorem}{Theorem}[section]
\newtheorem{proposition}[theorem]{Proposition}
\theoremstyle{definition}
\newtheorem{definition}[theorem]{Definition}
\theoremstyle{remark}
\begin{document}
	
	\title{A note on incremental POD algorithms for continuous time data}

\author{Hiba Fareed%
	\thanks{Department of Mathematics and Statistics, Missouri University of Science and Technology, Rolla, MO (\mbox{hf3n3@mst.edu}, \mbox{singlerj@mst.edu}).}
	\and
	John~R.~Singler%
	\footnotemark[1]
}

\maketitle

\begin{abstract}
	In our earlier work \cite[Fareed et al., Comput.\ Math.\ Appl.\ 75 (2018), no.\ 6, 1942-1960]{Fareed17}, we developed an incremental approach to compute the proper orthogonal decomposition (POD) of PDE simulation data.  Specifically, we developed an incremental algorithm for the SVD with respect to a weighted inner product for the discrete time POD computations.  For continuous time data, we used an approximate approach to arrive at a discrete time POD problem and then applied the incremental SVD algorithm.  In this note, we analyze the continuous time case with simulation data that is piecewise constant in time such that each data snapshot is expanded in a finite collection of basis elements of a Hilbert space.  We first show that the POD is determined by the SVD of two different data matrices with respect to weighted inner products.  Next, we develop incremental algorithms for approximating the two matrix SVDs with respect to the different weighted inner products.  Finally, we show neither approximate SVD is more accurate than the other; specifically, we show the incremental algorithms return equivalent results.
\end{abstract}

\textbf{Keywords:}  Proper orthogonal decomposition, continuous time, incremental SVD, weighted inner products

\textbf{Mathematics subject classifications (2010):}  65F30, 15A18

\section{Introduction}

Proper orthogonal decomposition (POD) is a data approximation technique that has been successfully used for many applications in various fields; see, e.g., \cite{Chaturantabut17, YangVeneziani17, XieMohebujjamanRebholzIliescu18, MohebujjamanRebholzXieIliescu17, GunzburgerJiangSchneier17, Kostova-VassilevskaOxberry18, Holmes12, Farhat15, Zimmermann14, Peng16, Kalashnikova14, Amsallem16, GubischVolkwein17, Stefanescu15, BaumannBennerHeiland18}.  The first part of any such application is to use POD to extract basis elements, called POD modes, from experimental or simulation data.  The POD modes are then used in various ways, such as forming optimal low order reconstructions of the data or constructing reduced order models of ordinary and partial differential equations (PDEs).

In the most basic case, the POD modes for a data set can be found using the singular value decomposition (SVD) of an appropriate data matrix.  Therefore, as the amount of data increases the computational cost and storage requirement for finding the POD modes also increase.  For this reason, researchers have investigated various approaches to lowering the computational cost and storage requirement for constructing the POD modes, the matrix SVD, and other related quantities \cite{Brand02, Brand06, BakerGallivan12, Chahlaoui03, Iwen16, Mastronardi05, Mastronardi08, Fahl01, BeattieBorggaard06, WangMcBeeIliescu16, HimpeLeibnerRave_pp}.  These more efficient algorithms have been used in various applications involving POD and other related approaches, such as the dynamic mode decomposition \cite{PlaczekTranOhayon11, Amsallem15, CoriglianoDossiMariani15, PeherstorferWillcox15, PeherstorferWillcox16, Schmidt17, ZahrFarhat15, Zimmermann17, ZimmermannPeherstorferWillcox17, NME:NME5283}.

We developed an incremental algorithm for POD computations in our earlier work \cite{Fareed17}.  In that algorithm, we considered simulation data arising from a Galerkin-type approximation method (e.g., a finite element method) for a PDE and updated the POD singular values and POD modes as new data became available.  Due to the class of PDE simulation data considered in \cite{Fareed17}, we developed an incremental SVD algorithm with respect to a weighted inner product to perform the POD computations.  The algorithm is computationally efficient, needs very little storage, and is also easily used in conjunction with an existing time stepping PDE approximation code.  We also recently performed an error analysis of the method in \cite{FareedSingler18pp}.

In both of these earlier works \cite{Fareed17, FareedSingler18pp}, we only performed analysis for the discrete time case, i.e., the data set is a finite collection of elements in a Hilbert space.  In \cite[Section 5]{Fareed17}, we developed an algorithm for the case of time varying data by approximating the POD integral operator using a Riemann sum and then performing an incremental POD/SVD update with respect to the weighted inner product.  In this note, we analyze the continuous time case assuming the data is piecewise constant in time.  First, in \Cref{sec:cont_disc_POD} we rigorously establish a precise relationship between the POD of the data and the SVD of two different matrices with respect to different weighted inner products.  In \Cref{sec:incr_SVD_weights}, we develop approximate incremental SVD algorithms for both cases and show that neither computed SVD is more accurate than the other; specifically, we show the incrementally computed approximate SVDs are equivalent.

\section{Background}
\label{Section:basic}

We begin by recalling material concerning the SVD of compact linear operators, the continuous time proper orthogonal decomposition, and the SVD of matrices with respect to weighted inner products.

\subsection{The SVD of a compact linear operator}
\label{subsec:SVD_operators}

In order to discuss the continuous time proper orthogonal decomposition, we first need to recall the singular value decomposition of a compact linear operator.

Let $ X $ and $ Y $ be separable Hilbert spaces with inner products $ (\cdot,\cdot)_X $ and $ (\cdot,\cdot)_Y $, and let $ A : X \rightarrow Y $ be a compact linear operator.  The Hilbert adjoint operator $ A^* : Y \rightarrow X $ is the compact linear operator satisfying $ (A x, y )_Y = ( x, A^* y )_X $ for all $ x \in X $ and $ y \in Y $.  The self-adjoint nonnegative compact operators $ AA^* : Y \to Y$ and $ A^*A : X \to X$ both have nonnegative eigenvalues and an orthonormal basis of eigenvectors.  The positive eigenvalues of these operators are equal and can be ordered as $ \lambda_1 \geq \lambda_2 \geq \cdots > 0 $.  The square roots of the positive eigenvalues are equal to the (ordered) positive singular values $ \{ \mu_k \} $ of $A$, and zero is included as a singular value of $ A $ if either $ A A^* $ or $ A^* A $ has a zero eigenvalue.  Denoting the orthonormal basis of eigenvectors of $ AA^*$ by $ \{\eta_k\} \subset Y $ and the orthonormal basis of eigenvectors of  $ A^*A$ by $ \{\xi_k\} \subset X $, we have the singular value expansions
\begin{equation*}
  A\xi = \sum_{k\geq1} \mu_k \,(\xi,\xi_k)_X\, \eta_k,  \quad    A^*\eta = \sum_{k\geq1} \mu_k \,(\eta,\eta_k)_Y\, \xi_k,
\end{equation*}
for all $ \xi \in X $ and $ \eta \in Y $.  This gives
\begin{equation}\label{eqn:svd_eqns}
  A\xi_i = \mu_i \eta_i,  \quad  A^*\eta_i = \mu_i \xi_i,  \quad  \forall  \mu_i > 0.
\end{equation}
Since only the positive singular values and corresponding singular vectors appear in the above formulas, we call these quantities the \textit{core} singular values and singular vectors.

For more information, see, e.g., \cite[Chapters VI--VIII]{GohbergGoldbergKaashoek90}, \cite[Chapter 30]{Lax02}, \cite[Sections VI.5--VI.6]{ReedSimon80}.

\subsection{Continuous time proper orthogonal decomposition}
\label{subsec:POD_continuous}

Let $ X $ be a separable Hilbert space with inner product $ (\cdot,\cdot)_X $ and corresponding norm $ \| \cdot \|_X $, and let $ I \subset \mathbb{R} $ be an interval.  Suppose $ u $ is a given element of the Bochner space $ L^2(I;X) $, i.e., roughly, $ \int_I \| u(t) \|_X^2 \, dt < \infty $.  The continuous time proper orthogonal decomposition problem is to find an orthonormal basis $ \{ x_k \} \subset X $ (called the POD modes) minimizing the data approximation error
$$
  E_r := \int_I \| u(t) - P_r u(t) \|^2_X \, dt,
$$
where $ P_r : X \to X $ is the orthogonal projection onto the first $ r $ basis elements, i.e.,
$$
  P_r u = \sum_{k=1}^r (u,x_k)_X x_k.
$$
The solution of the POD problem comes from the continuous time POD operator $ K : L^2(I) \to X $ for the data $ u $ defined by
\begin{equation}\label{equ:10}
  K f = \int_I u(t) f(t) \, dt.
\end{equation}
The POD operator is compact, and the Hilbert adjoint operator $ K^* : X \to L^2(I) $ is given by
\begin{equation}\label{eqn:POD_K_adjoint}
  (K^* x)(t) = \big( x, u(t) \big)_X.
\end{equation}
Let $ \{ \sigma_i, f_i, x_i \}_{i \geq 1} \subset \mathbb{R} \times L^2(I) \times X $ be the ordered singular values and corresponding orthonormal singular vectors.  The orthonormal basis minimizing the error is exactly given by the singular vectors $ \{ x_i \}_{i \geq 1} \subset X $, and the minimal value for the error is
$$
  E_r^\mathrm{min} = \sum_{i > r} \sigma_i^2.
$$

Given the data $ u $, a typical computation of the solution of the POD problem above focuses on finding the POD eigenvalues and modes $ \{ \lambda_i, x_i \}_{i \geq 1} $, where the POD eigenvalues are simply the squares of the POD singular values.  Also, in many applications the POD singular values for the data decay rapidly; therefore, only the first $ R $ singular values and modes are computed, where $ R $ is chosen so that $ E_R^\mathrm{min} $ or $ \sigma_R $ is small enough for the application.  If one is interested in approximately reconstructing the data (without storing the data), then one may desire to also compute the dominant $ L^2(I) $ POD singular vectors $ \{ f_i \}_{i \geq 1} $ since the optimal data approximation $ P_r u(t) $ can be rewritten as
$$
  P_r u(t) = \sum_{k=1}^r \sigma_k \, \overline{f_k(t)} \, x_k,
$$
where the bar denotes complex conjugate (for complex Hilbert spaces) \cite[Section 2.3]{Singler14}.

For more information, see, e.g., \cite{KunischVolkwein02,Singler11,Volkwein13,GubischVolkwein17}.

\subsection{The matrix SVD and weighted inner products}
\label{subsec:SVD_weight}

In continuous time POD applications, the given data $ u(t) $ is often finite dimensional; for example, the data is an approximate solution of a partial differential equation.  As we show in this note, for certain types of finite dimensional data the SVD required for the continuous time POD can be reduced to a matrix SVD with respect to weighted inner products.  Below, we review this type of matrix SVD, mostly following \cite{Fareed17}.

First, we give notation.  For a symmetric positive definite matrix $ M \in \mathbb{R}^{m \times m} $, let $ \mathbb{R}^m_M $ denote the Hilbert space $ \mathbb{R}^{m} $ with the $ M $-weighted inner product and corresponding norm given by $(x,y)_M = y^T M x$ and $ \| x \|_M = (x,x)_M^{1/2} = (x^T M x)^{1/2} $ for all $ x, y \in \mathbb{R}^m $.  The space $ \mathbb{R}^s_\Delta $ is defined in the same way, where $ \Delta $ is also symmetric positive definite.  Also, $\mathbb{R}^k $ without a subscript indicates the space is given the standard unweighted inner product and norm.

Next, for a matrix $ A \in \mathbb{R}^{m\times s} $ considered as a linear operator $ A : \mathbb{R}_{\Delta}^{s} \to \mathbb{R}_{M}^{m}$, the Hilbert adjoint operator $ A^{*} : \mathbb{R}_{M}^{m} \to \mathbb{R}_{\Delta}^{s}$ is the matrix $ A^{*} \in \mathbb{R}^{s \times m} $ satisfying
$$
  (Ax,y)_{M}=(x,A^{*}y)_{\Delta}  \quad  \mbox{for all $ x\in \mathbb{R}^{s}_\Delta $ and  $ y\in \mathbb{R}_{M}^{m} $.}
$$
It is straightforward to show $ A^{*} = \Delta^{-1} A^{T} M $.

Now we use the definition of the SVD of a compact operator in \Cref{subsec:SVD_operators} applied to the matrix $ A : \mathbb{R}_{\Delta}^{s} \to \mathbb{R}_{M}^{m} $.  Furthermore, we only focus on positive (or \textit{core}) singular values since zero singular values are typically not of interest in POD applications.  Suppose $ A $ has exactly $ k $ positive singular values $ \sigma_1\geq\sigma_2\geq \cdots \geq \sigma_k > 0 $.  Let $ V = [ v_1, v_2, \ldots, v_k ] \in \mathbb{R}^{m \times k} $ be the matrix whose columns are the first $ k $ $ M $-orthonormal eigenvectors of $ A A^{*} :  \mathbb{R}_{M}^{m} \to  \mathbb{R}_{M}^{m} $, and let $ W = [ w_1, w_2, \ldots, w_k ] \in \mathbb{R}^{s \times k} $ be the matrix whose columns are the first $ k $ $ \Delta $-orthonormal eigenvectors of $ A^{*} A : \mathbb{R}_{\Delta}^s \to \mathbb{R}_{\Delta}^s $.  \Cref{eqn:svd_eqns} gives
\begin{equation}\label{eqn:core_SVD_equivalent}
  A W = V \Sigma,  \quad  A^{*} V = W \Sigma,  \quad  \Sigma = \mathrm{diag}(\sigma_1, \ldots, \sigma_k).
\end{equation}
Since $ \{v_j\}_{j=1}^m $ and $ \{w_j\}_{j=1}^s $ are orthonormal in $ \mathbb{R}_{M}^{m} $ and $ \mathbb{R}_{\Delta}^{s} $, respectively, we have $ W^T \Delta W = I $ and $ V^T M V = I $.  Alternatively, we write $ W^* W = I $ and $ V^* V = I $, where $ W^* = W^T \Delta $ and $ V^* = V^T M $ are the Hilbert adjoint operators for $ W : \mathbb{R}^k \to \mathbb{R}^s_\Delta $ and $ V : \mathbb{R}^k \to \mathbb{R}^m_M $.  This implies \eqref{eqn:core_SVD_equivalent} is equivalent to
\begin{equation}\label{eqn:def_core_SVD}
  A = V \Sigma W^{*}.
\end{equation}

This leads to the following definition:
\begin{definition}
  For a matrix $ A: \mathbb{R}_{\Delta}^{s} \to \mathbb{R}_{M}^{m}$ with exactly $ k $ positive singular values, a \textit{core SVD} of $ A $ is given by $ A = V \Sigma W^{*} $, where $ V \in \mathbb{R}^{m \times k} $, $ \Sigma \in \mathbb{R}^{k \times k} $, and $ W \in \mathbb{R}^{s \times k} $ are defined above.
\end{definition}
See \cite{Fareed17} for more discussion about the core SVD.  If both inner products are unweighted, we call the core SVD the standard core SVD for clarity.

In the following result, we give a basic property of the core SVD.  The result is similar to Proposition 2.3 in \cite{Fareed17}.  The proof is also similar, and is omitted.
\begin{proposition} \label{prop:2.3}
	Suppose $ V_u \in \mathbb{R}^{m \times k} $ has $ M $-orthonormal columns and $ W_u \in \mathbb{R}^{s \times k} $ has $\Delta$-orthonormal columns.  If $ Q \in \mathbb{R}^{k \times k} $ has standard core SVD $ Q = V_Q \Sigma_Q W_Q^T $ and $ A: \mathbb{R}_{\Delta}^{s} \to \mathbb{R}_{M}^{m}$ is defined by $ A = V_u Q W_u^{*} $, then
	\begin{equation}\label{eqn:P_coreSVD_product}
	  A = V \Sigma_Q W^{*},  \quad  V = V_u V_Q,  \quad  W = W_u W_Q,
	\end{equation}
	is a core SVD of $ A $.
\end{proposition}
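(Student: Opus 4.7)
The plan is to verify directly that the factorization $A = V\Sigma_Q W^*$ with $V = V_u V_Q$ and $W = W_u W_Q$ satisfies the structural properties characterizing a core SVD of the weighted operator $A:\mathbb{R}^s_\Delta\to\mathbb{R}^m_M$. By the equivalence of \eqref{eqn:core_SVD_equivalent} and \eqref{eqn:def_core_SVD}, it suffices to check that $V$ has $M$-orthonormal columns, $W$ has $\Delta$-orthonormal columns, $\Sigma_Q$ is a positive diagonal matrix of the correct size, and the product equals $A$.

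First I would establish the orthonormality of the new factors. Since $Q$ is $k \times k$ with exactly $k$ positive singular values, its standard core SVD has $V_Q, W_Q \in \mathbb{R}^{k\times k}$ orthogonal, so $V_Q^T V_Q = W_Q^T W_Q = I$. Using the hypothesis $V_u^T M V_u = I$, a short computation gives
\begin{equation*}
V^T M V = V_Q^T (V_u^T M V_u) V_Q = V_Q^T V_Q = I,
\end{equation*}
and the analogous calculation with $\Delta$ in place of $M$ yields $W^T \Delta W = I$. Hence $V$ has $M$-orthonormal columns and $W$ has $\Delta$-orthonormal columns, which is the same as writing $V^* V = I$ and $W^* W = I$ in the weighted adjoint notation of the excerpt.

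Next I would carry out the algebraic identification. The key observation is a compatibility between the unweighted transpose in the SVD of $Q$ and the weighted adjoint for $W$: since $W_u^* = W_u^T \Delta$, we have
\begin{equation*}
W^* = (W_u W_Q)^T \Delta = W_Q^T W_u^T \Delta = W_Q^T W_u^*.
\end{equation*}
Substituting $Q = V_Q \Sigma_Q W_Q^T$ into the defining expression for $A$ and regrouping then gives
\begin{equation*}
A = V_u Q W_u^* = V_u V_Q \Sigma_Q W_Q^T W_u^* = V \Sigma_Q W^*,
\end{equation*}
which is exactly the asserted factorization \eqref{eqn:P_coreSVD_product}.

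I do not anticipate a real obstacle here; the only subtle point is keeping the weighted adjoints straight, in particular recognizing that $W_Q^T$ (the unweighted transpose arising from the standard SVD of $Q$) combines cleanly with $W_u^*$ to yield the weighted adjoint $W^*$, and similarly that the $M$-orthonormality of $V_u$ together with the unweighted orthogonality of $V_Q$ produces $M$-orthonormality of $V$. Once these bookkeeping checks are done, invoking the characterization \eqref{eqn:def_core_SVD} of a core SVD completes the proof.
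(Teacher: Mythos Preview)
Your proof is correct and follows the natural direct verification; the paper itself omits the proof, referring to the analogous Proposition~2.3 in \cite{Fareed17}, so there is nothing further to compare. One small point: you assert that $Q$ has exactly $k$ positive singular values, which is not part of the hypothesis. This is harmless, since the only consequence you actually use is $V_Q^T V_Q = W_Q^T W_Q = I$, and that holds for the core SVD factors of $Q$ regardless of its rank (with $I$ the identity of size $r = \operatorname{rank} Q \le k$); the remainder of your argument goes through verbatim with $V \in \mathbb{R}^{m\times r}$ and $W \in \mathbb{R}^{s\times r}$.
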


\textbf{Other notation:}  For a vector $ v \in \mathbb{R}^{n} $ and $ k \leq n $, let $ v_{(1:k)} $ denote the vector of the first $ k $ components of $ v $. Similarly, for a matrix $ A \in \mathbb{R}^{m \times n}$, let $ A_{(p : q, r : s)} $ denote the submatrix of $ A $ consisting of the entries of $ A $ from rows $ p, \ldots, q $ and columns $ r, \ldots, s $.  Also, the notation $ A_{(:,r:s)} $ is defined similarly, except all rows are included in the submatrix.

\section{Continuous time POD with finite dimensional data}
\label{sec:cont_disc_POD}

Next, we consider continuous time POD of a specific class of finite dimensional data that is often generated by Galerkin-type methods for approximating solutions of partial differential equations.   First, in \Cref{subsec:cont_POD_approx} we review our approximate approach from \cite{Fareed17}.  Next, in \Cref{subsec:cont_POD_exact} we show that the SVD of the POD operator is equivalent to the SVD of two different matrices with respect to different weighted inner products.  Below, let $ X $ be a real separable Hilbert space with inner product $ (\cdot,\cdot)_X $, and suppose $ u \in L^{2}( I; X)$, where $ I = (0,T) $ and $ T > 0 $ is fixed.

\subsection{Approximate approach using a Riemann sum}
\label{subsec:cont_POD_approx}

Below, we give a brief summary of our approximate approach in \cite[Section 5]{Fareed17}.  Assume the data $ u $ is known at certain points in time $ 0 = t_1 < t_2 < \cdots < t_{s+1} = T $ and approximate the POD integral operator \eqref{equ:10} using a Riemann sum to get
$$
  K g \approx \sum_{j=1}^s \delta_j \, u_j \, g(t_j),  \quad  \delta_j = t_{j+1} - t_{j},  \quad  u_j = u(t_{j}).
$$
Next, define $ \tilde{u}_j = \delta_j^{1/2} u(t_j) $ and $ h_j = \delta_j^{1/2} g(t_j) $ and approximate the continuous POD operator $ K : L^2(0,T) \to X $ by a discrete POD operator $ \tilde{K} : \mathbb{R}^s \to X $ as follows:
$$
  K g \approx \tilde{K} h := \sum_{j=1}^s \tilde{u}_j \, h_j.
$$

Assume $ u_j \in X $ for each $ j $ is expressed in terms of a finite set of basis functions:
\begin{equation}\label{eqn:data_expanded_basis}
  u_{j} = \sum_{k=1}^{m} U_{k,j} \phi_{k},  \quad  \mbox{for $ j = 1, \ldots, s $,}
\end{equation}
where $ \{ \phi_k \}_{k=1}^m \subset X $ is a linearly independent set.  Define the matrices $ M \in \mathbb{R}^{m \times m} $ and $ U \in \mathbb{R}^{m \times s} $ by their entries $ M_{j,k} := ( \phi_{j} , \phi_{k} )_X $ and $ U_{k,\ell} $, for $ j, k = 1, \ldots, m $ and $ \ell = 1, \ldots, s $.  Also, define the matrix $ \Delta \in \mathbb{R}^{s \times s} $ by $\Delta = \mathrm{diag}(\delta_1, ..., \delta_s)$.  We can use Appendix A.1 in \cite{Fareed17} to see that the core SVD of the discrete POD operator $ \tilde{K} : \mathbb{R}^s \to X $ is obtained from the core SVD of the matrix $ U \Delta^{1/2} : \mathbb{R}^s \to \mathbb{R}^m_M $.  We do not give the details here since similar details will be provided below.

We note that we approximated the POD integral operator and found that the SVD of the resulting discrete POD operator can be obtained by the SVD of the matrix $ U \Delta^{1/2} : \mathbb{R}^s \to \mathbb{R}^m_M $.  This SVD with respect to a weighted inner product can be updated incrementally, as in \cite{Fareed17}.

\subsection{Exact approach assuming the data is piecewise constant in time}
\label{subsec:cont_POD_exact}

Next, we show that the SVD of the POD operator can be obtained without approximation assuming the data $ u $ is piecewise constant in time.  Specifically, we assume the data $ u $ has the form
\begin{equation}\label{eqn:fin_dim_data}
  u(t) = \sum^{s}_{j=1} u_{j}\, \chi_j(t),
\end{equation}
where $ \{u_{j} \} \subset X $ is given as in \eqref{eqn:data_expanded_basis}, $ 0 = t_1 < t_2 < \cdots < t_{s+1} = T $, and the characteristic functions are defined by
\begin{equation}\label{eqn:char_fncs}
  \chi_j(t) = \begin{cases} 1, & t_{j} < t < t_{j+1}, \\ 0, & \text{otherwise.}  \end{cases}
\end{equation}
We first show in \Cref{prop:SVD_two_weights} that the core singular values and singular vectors of $ K $ can be computed by finding the core SVD of a weighted coefficient matrix with respect to two weighted inner products.

As in \Cref{subsec:cont_POD_approx}, throughout this section let the matrices $ M \in \mathbb{R}^{m \times m} $ and $ U \in \mathbb{R}^{m \times s} $ have entries $ M_{j,k} := ( \phi_{j} , \phi_{k})_X $ and $ U_{k,\ell} $, for $ j, k = 1, \ldots, m $ and $ \ell = 1, \ldots, s $.  Also, let $ \Delta \in \mathbb{R}^{s \times s} $ be given by $\Delta = \mathrm{diag}(\delta_1, ..., \delta_s)$, where $ \delta_j = \delta_j = t_{j+1} - t_{j} $ for $ j = 1, \ldots, s $.

\begin{proposition}\label{prop:SVD_two_weights}
Suppose $ \{\phi_{k}\}_{k=1}^{m} \subset X $ are linearly independent, and assume $ u \in L^2(0,T;X) $ is given by \eqref{eqn:data_expanded_basis}-\eqref{eqn:char_fncs}.  Then $ \{ \sigma_i, w_i, v_i \} \subset \mathbb{R} \times \mathbb{R}^s_\Delta \times \mathbb{R}^m_M $ are the core singular values and singular vectors of $ U \Delta : \mathbb{R}^s_\Delta \to \mathbb{R}^m_M $ if and only if $\{ \sigma_{i} , f_{i}, x_{i} \} \subset \mathbb{R} \times L^2( 0, T) \times X $ are the core singular values and singular vectors of $ K : L^{2}( 0, T) \rightarrow X $, where, for all $ i $, $ v_i \in \mathbb{R}^m_M $ and $ x_i \in X $ are related by
\begin{equation}\label{eqn:v_sv}
  x_{i} = \sum_{k=1}^{m} v_{i,k} \, \phi_{k},
\end{equation}
and $ w_i \in \mathbb{R}^s_\Delta $ (with entries $ w_{i,j} $) and $ f_i \in L^2(0,T) $ are related by
\begin{align}
  w_{i,j}  &=  \int^{T}_{0} \delta^{-1}_j \chi_j(t) f_{i}(t) \, dt,\label{eqn:w_sv}\\
  f_i(t)   &=  \sum_{\ell = 1}^s w_{i,\ell} \, \chi_\ell(t).\label{eqn:f_sv}
\end{align}
%
%
\end{proposition}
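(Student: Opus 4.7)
The plan is to exhibit $K$ as a product involving $U\Delta$ and two isometries, and then read off the SVD of $K$ from the core SVD of $U\Delta$ (and vice versa).

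First, I would introduce the two natural isometric embeddings associated with the bases $\{\phi_k\}$ and $\{\chi_\ell\}$. Define $\Phi : \mathbb{R}^m_M \to X$ by $\Phi v = \sum_{k=1}^m v_k \phi_k$ and $\Psi : \mathbb{R}^s_\Delta \to L^2(0,T)$ by $\Psi w = \sum_{\ell=1}^s w_\ell \chi_\ell$. A direct Gram-matrix computation shows $(\Phi v, \Phi v')_X = (v')^T M v = (v,v')_M$ (using $M_{j,k}=(\phi_j,\phi_k)_X$), and since $\int_0^T \chi_\ell \chi_k\,dt = \delta_\ell\,\delta_{\ell k}$, we similarly get $(\Psi w, \Psi w')_{L^2} = (w')^T \Delta w = (w,w')_\Delta$. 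So both maps are isometries; in particular $\Phi^*\Phi = I$ and $\Psi^*\Psi = I$.

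Next I would compute $\Psi^*$. From $(\Psi w, g)_{L^2} = \sum_\ell w_\ell \int_0^T \chi_\ell g\,dt$ and $(w,\Psi^* g)_\Delta = \sum_\ell \delta_\ell w_\ell (\Psi^* g)_\ell$, one reads off $(\Psi^* g)_j = \delta_j^{-1}\int_0^T \chi_j(t) g(t)\,dt$. The main computation of the proof is then the identity
\begin{equation*}
  K \;=\; \Phi\,(U\Delta)\,\Psi^{*}.
\end{equation*}
This follows by composing: for $g\in L^2(0,T)$, $(\Delta\Psi^* g)_\ell = \int_0^T \chi_\ell g\,dt$, then $(U\Delta\Psi^* g)_k = \sum_\ell U_{k,\ell}\int_0^T \chi_\ell g\,dt$, and finally $\Phi(U\Delta\Psi^* g) = \sum_\ell u_\ell \int_0^T \chi_\ell g\,dt = \int_0^T u(t)g(t)\,dt = Kg$, using \eqref{eqn:data_expanded_basis} and \eqref{eqn:fin_dim_data}.

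Now I would push the core SVD through this factorization. If $U\Delta = V\Sigma W^{*}$ is a core SVD in the sense of \Cref{subsec:SVD_weight} (with $W^* = W^T\Delta$, $V^T M V = I$, $W^T\Delta W = I$, $\Sigma = \mathrm{diag}(\sigma_1,\ldots,\sigma_k)$ positive), then
\begin{equation*}
  K \;=\; (\Phi V)\,\Sigma\,W^T\Delta\,\Psi^{*} \;=\; (\Phi V)\,\Sigma\,(\Psi W)^{*},
\end{equation*}
where I use that the Hilbert adjoint of $\Psi W : \mathbb{R}^k \to L^2(0,T)$ is $W^*\Psi^* = W^T\Delta\,\Psi^*$. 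Isometry of $\Phi$ and $\Psi$ together with $V^T M V = W^T\Delta W = I$ shows the columns $x_i := \Phi v_i$ are orthonormal in $X$ and $f_i := \Psi w_i$ are orthonormal in $L^2(0,T)$, and $\Sigma$ is positive, so this is a core SVD of $K$. Unpacking $\Phi$ and $\Psi$ gives exactly \eqref{eqn:v_sv} and \eqref{eqn:f_sv}, while \eqref{eqn:w_sv} follows from $w_i = \Psi^*\Psi w_i = \Psi^* f_i$ and the formula for $\Psi^*$.

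For the converse direction I would observe that the identifications are invertible on the relevant subspaces: the range of $K$ lies in $\mathrm{span}\{\phi_k\}$ and the range of $K^*$ (from \eqref{eqn:POD_K_adjoint} and the piecewise-constant structure of $u$) lies in $\mathrm{span}\{\chi_\ell\}$, so every core singular vector $x_i$ of $K$ has a unique representation as in \eqref{eqn:v_sv} and every $f_i$ as in \eqref{eqn:f_sv}; running the above argument backwards produces the core SVD of $U\Delta$. The only subtlety worth flagging is the bookkeeping between $W^*$, $W^T\Delta$, and $(\Psi W)^*$, i.e., verifying that the weighted adjoint in the matrix setting and the Hilbert adjoint in the function-space setting compose correctly through $\Psi$; this is the one place where writing out the inner-product identities carefully is essential, but it is routine once the isometry $\Psi$ is in place.
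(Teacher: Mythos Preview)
Your proof is correct and takes a genuinely different route from the paper.  The paper proceeds by direct component-wise verification: it substitutes the expansions \eqref{eqn:data_expanded_basis}--\eqref{eqn:char_fncs} into $Kf_i=\sigma_i x_i$ and $K^*x_i=\sigma_i f_i$, uses linear independence of $\{\phi_k\}$ and the orthogonality $\int_0^T \chi_q\chi_j\,dt=\delta_j\,\delta_{qj}$ to peel off coefficients, and arrives at $U\Delta w_i=\sigma_i v_i$ and $(U\Delta)^*v_i=\sigma_i w_i$; orthonormality of the $\{v_i\}$ and $\{w_i\}$ is then checked separately from that of $\{x_i\}$ and $\{f_i\}$.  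You instead package all of this into the single structural identity $K=\Phi(U\Delta)\Psi^*$ with $\Phi,\Psi$ isometries, after which the SVD transfer and the orthonormality are immediate consequences of $\Phi^*\Phi=I$, $\Psi^*\Psi=I$.  Your approach is cleaner and makes the mechanism transparent (and would also yield \Cref{prop:SVD_one_weight} with essentially no extra work, by replacing $\Psi$ with the isometry built from the orthonormal $\{\tilde\chi_\ell\}$); the paper's approach is more elementary and avoids introducing the auxiliary operators $\Phi,\Psi$.  The one place where you should be explicit is the converse: you correctly note that $\mathrm{ran}\,K\subset\mathrm{ran}\,\Phi$ and $\mathrm{ran}\,K^*\subset\mathrm{ran}\,\Psi$, but it is worth writing the one-line identity $U\Delta=\Phi^*K\Psi$ (from $\Phi^*\Phi=I$, $\Psi^*\Psi=I$) to make ``running the argument backwards'' concrete.
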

\begin{proof}
First, since $ \{\phi_{k}\}_{k=1}^{m} \subset X $ is a linearly independent set, we know $ M $ is symmetric positive definite.

Next, assume $ K f_i = \sigma_i x_i $ holds with $ \sigma_i > 0 $, and define $ w_i \in \mathbb{R}^s_\Delta $ as in \eqref{eqn:w_sv}.  Use the definitions of $ K $ in (\ref{equ:10}) and $ u $ in \eqref{eqn:data_expanded_basis}-\eqref{eqn:char_fncs} to get
$$
   \sum_{k=1}^m \bigg( \int^{T}_{0} \sum^{s}_{j=1} U_{k,j} \, \chi_j(t) \, f_i(t) \, dt \bigg) \phi_k = \sigma_i \, x_i.
$$
This implies there exists constants $ v_{i,k} $ so that \eqref{eqn:v_sv} holds.  Substitute \eqref{eqn:v_sv} in the formula above, and then use that $ \{\phi_{k}\}_{k=1}^{m} \subset X $ is a linearly independent set to obtain
$$
  \sum^{s}_{j=1}   U_{k,j} \, \delta_j \, w_{i,j} = \sigma_i \, v_{i,k}.
$$
Therefore,
\begin{equation}\label{eqn:U_sv_eqn1}
  U \Delta w_{i} = \sigma_{i} v_{i}.
\end{equation}
Now assume \eqref{eqn:U_sv_eqn1} holds with $ \sigma_i > 0 $, and define $ f_i \in L^2(0,T) $ by \eqref{eqn:f_sv}.  A similar argument implies that $ K f_i = \sigma_i x_i $, where $ x_i $ is given in \eqref{eqn:v_sv}.

Next, assume $ K^* x_i = \sigma_i f_i $ holds with $ \sigma_i > 0 $, $ x_i $ satisfies \eqref{eqn:v_sv}, and $ w_{i,j} $ is defined by \eqref{eqn:f_sv}.  Use the definitions of $ K^* $ in \eqref{eqn:POD_K_adjoint} and $ u $ in \eqref{eqn:data_expanded_basis}-\eqref{eqn:char_fncs} to get
$$
  \sum_{q=1}^s \sum_{k=1}^m \sum_{\ell=1}^s  v_{i,k} \big( \phi_k, \phi_\ell \big)_X  U_{\ell,q}  \chi_q(t)  =  \sigma_i  f_i(t).
$$
Multiply by $ \chi_j(t) $, integrate over $ (0,T) $, and use $ \int_0^T \chi_q(t) \, \chi_j(t) \, dt = \delta_j \, \delta_{qj} $, where $ \delta_{qj} $ denotes the Kronecker delta symbol, to obtain
$$
  \sum_{k=1}^m \sum_{\ell=1}^s  v_{i,k} M_{k,\ell}  U_{\ell,j}  \delta_j  =  \sigma_i  \delta_j  w_{i,j}.
$$
This gives $ v_i^T M U \Delta = \sigma_i w_i^T \Delta $, or $ U^T M v_i = \sigma_i w_i $.  Since $ (U \Delta)^* = \Delta^{-1} (U \Delta)^T M = \Delta^{-1} \Delta  U^T  M = U^T  M $, we have
\begin{equation}\label{eqn:U_sv_eqn2}
  (U\Delta)^{*} v_{i} = \sigma_{i} w_{i}.
\end{equation}
Now assume \eqref{eqn:U_sv_eqn2} holds with $ \sigma_i > 0 $, and define $ f_i \in L^2(0,T) $ by \eqref{eqn:f_sv}.  Again, a similar argument implies that $ K^* x_i = \sigma_i f_i $, where $ x_i $ is given in \eqref{eqn:v_sv}.

This implies we have
$$
  U\Delta w_{i} = \sigma_{i} v_{i},  \quad  (U\Delta)^{*} v_{i} = \sigma_{i} w_{i}  \quad  \mbox{for all $ i $ with $ \sigma_i > 0 $}
$$
if and only if
$$
  K f_{i} = \sigma_{i} x_{i},  \quad  K^{*} x_{i} = \sigma_{i} f_{i}  \quad  \mbox{for all $ i $ with $ \sigma_i > 0 $.}
$$
Furthermore, $ x_i $, $ v_i $, $ w_i $, and $ f_i $ are related by \eqref{eqn:v_sv}-\eqref{eqn:f_sv}.

Next, assume $\{ \sigma_{i} , f_{i}, x_{i} \} \subset \mathbb{R} \times L^{2}( 0, T) \times X $ are the core singular values and singular vectors of $ K : L^{2}( 0, T) \rightarrow X $.  We show $ \{ w_i \} \subset \mathbb{R}^s_\Delta $ and $ \{ v_i \} \subset \mathbb{R}^m_M $ are both orthonormal sets.  First, using the definition of $ M $ and \eqref{eqn:v_sv} gives
$$
  ( v_i, v_j )_M = ( x_i, x_j )_X = \delta_{ij}.
$$
Next,
\begin{align*}
  ( w_i, w_j )_\Delta &=  \frac{1}{ \sigma_j } \big( w_i, (U\Delta)^* v_j)_\Delta\\
    &=  \frac{1}{ \sigma_j } \big( (U\Delta) w_i, v_j \big)_{M}\\
    &=  \frac{ \sigma_{i} }{ \sigma_j } ( v_i, v_j )_{M}\\
    &=  \frac{ \sigma_{i} }{ \sigma_j } \delta_{ij}  =  \delta_{ij}.
\end{align*}
Therefore $ \{ \sigma_i, w_i, v_i \} \subset \mathbb{R} \times \mathbb{R}^s_\Delta \times \mathbb{R}^m_M $ are the core singular values and singular vectors of $ U\Delta : \mathbb{R}^s_\Delta \to \mathbb{R}^m_M $.

Finally, assume $ \{ \sigma_i, h_i, c_i \} \subset \mathbb{R} \times \mathbb{R}^s_\Delta \times \mathbb{R}^m_M $ are the core singular values and singular vectors of $ U\Delta : \mathbb{R}^s_\Delta \to \mathbb{R}^m_M $.  Similar arguments show $ \{ x_i \} \subset X $ and $ \{ f_i \} \subset L^{2}( 0, T) $ are orthonormal sets, and therefore $\{ \sigma_{i} , f_{i}, x_{i} \} \subset \mathbb{R} \times L^{2}( 0, T) \times X $ are the core singular values and singular vectors of $ K : L^{2}( 0, T) \rightarrow X $.
\end{proof}

In \Cref{subsec:cont_POD_approx}, we rescaled $ \{ u_j \} \subset X $ by the square roots of the time steps to arrive at a different matrix SVD problem.  Again assuming $ u $ is piecewise constant in time as in \eqref{eqn:fin_dim_data}, we have the alternative expansion
\begin{equation}\label{eqn:fin_dim_data2}
  u(t) = \sum^{s}_{j=1} \tilde{u}_{j} \, \tilde{\chi}_j(t),
\end{equation}
where $ \tilde{u}_{j} = \delta_j^{1/2} u_j $ and
\begin{equation}\label{eqn:char_fncs_on}
  \tilde{\chi}_j(t) = \begin{cases} \delta_j^{-1/2}, & t_{j-1} < t < t_{j}, \\ 0, & \text{otherwise.}  \end{cases}
\end{equation}
We show below that the core SVD of $ K $ is equivalent to the core SVD of the matrix $ U \Delta^{1/2} : \mathbb{R}^s \to \mathbb{R}^m_M $.  Note that this is the same matrix SVD we obtained using the Riemann sum approximation approach in \Cref{subsec:cont_POD_approx}.
\begin{proposition}\label{prop:SVD_one_weight}
	Suppose $ \{\phi_{k}\}_{k=1}^{m} \subset X $ are linearly independent, and assume $ u \in L^2(0,T;X) $ is given by \eqref{eqn:data_expanded_basis}-\eqref{eqn:char_fncs}.  Then $ \{ \sigma_i, \tilde{w}_i, v_i \} \subset \mathbb{R} \times \mathbb{R}^s \times \mathbb{R}^m_M $ are the core singular values and singular vectors of $ U \Delta^{1/2} : \mathbb{R}^s \to \mathbb{R}^m_M $ if and only if $\{ \sigma_{i} , f_{i}, x_{i} \} \subset \mathbb{R} \times L^2( 0, T) \times X $ are the core singular values and singular vectors of $ K : L^{2}( 0, T) \rightarrow X $, where, for all $ i $, $ v_i \in \mathbb{R}^m_M $ and $ x_i \in X $ are related by \eqref{eqn:v_sv} and $ \tilde w_i \in \mathbb{R}^s $ (with entries $ \tilde w_{i,j} $) and $ f_i \in L^2(0,T) $ are related by
	\begin{align}
	\tilde w_{i,j}  &=  \int^{T}_{0} \tilde \chi_j(t) f_{i}(t) \, dt,\label{eqn:w_sv_2}\\
	f_i(t)   &=  \sum_{\ell = 1}^s \tilde w_{i,\ell} \, \tilde \chi_\ell(t)  =  \sum_{\ell = 1}^s \delta^{-1/2}_\ell \tilde w_{i,\ell} \, \chi_\ell(t).\label{eqn:f_sv_2}
	\end{align}
	%
	%
\end{proposition}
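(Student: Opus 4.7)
My plan is to reduce \Cref{prop:SVD_one_weight} to \Cref{prop:SVD_two_weights} via a change of variables on the domain, rather than redoing the argument with $K$ and $K^*$ from scratch. The key observation is that $U\Delta = (U\Delta^{1/2})\,\Delta^{1/2}$, and the map $\Delta^{1/2} : \mathbb{R}^s_\Delta \to \mathbb{R}^s$ is an isometric isomorphism (since $(\Delta^{1/2} w, \Delta^{1/2} z) = w^T \Delta z = (w,z)_\Delta$). So I expect the core singular values of $U\Delta : \mathbb{R}^s_\Delta \to \mathbb{R}^m_M$ and $U\Delta^{1/2} : \mathbb{R}^s \to \mathbb{R}^m_M$ to coincide, with the left singular vectors $v_i$ unchanged and the right singular vectors related by $\tilde w_i = \Delta^{1/2} w_i$.

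Concretely, the plan is as follows. Assume first that $\{\sigma_i, w_i, v_i\}$ are the core singular values and vectors of $U\Delta : \mathbb{R}^s_\Delta \to \mathbb{R}^m_M$ supplied by \Cref{prop:SVD_two_weights}, and set $\tilde w_i = \Delta^{1/2} w_i \in \mathbb{R}^s$. First I verify the SVD equations for $U\Delta^{1/2}$: a direct computation gives $U\Delta^{1/2} \tilde w_i = U\Delta w_i = \sigma_i v_i$, and since the Hilbert adjoint of $U\Delta^{1/2} : \mathbb{R}^s \to \mathbb{R}^m_M$ is $(U\Delta^{1/2})^* = \Delta^{1/2} U^T M$, we obtain
\begin{equation*}
(U\Delta^{1/2})^* v_i = \Delta^{1/2} U^T M v_i = \Delta^{1/2} (U\Delta)^* v_i = \Delta^{1/2} \sigma_i w_i = \sigma_i \tilde w_i.
\end{equation*}
Orthonormality of $\{\tilde w_i\}$ in $\mathbb{R}^s$ follows from $\tilde w_i^T \tilde w_j = w_i^T \Delta w_j = (w_i,w_j)_\Delta = \delta_{ij}$, and $\{v_i\}$ is $M$-orthonormal by hypothesis. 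This gives one direction of the biconditional. The converse is entirely symmetric: given $\{\sigma_i, \tilde w_i, v_i\}$ a core SVD of $U\Delta^{1/2}$, set $w_i = \Delta^{-1/2} \tilde w_i$ and run the same computation in reverse to recover a core SVD of $U\Delta$, then apply \Cref{prop:SVD_two_weights}.

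To finish, I need to check that the $w_i \leftrightarrow f_i$ correspondence of \Cref{prop:SVD_two_weights} translates into the claimed $\tilde w_i \leftrightarrow f_i$ correspondence \eqref{eqn:w_sv_2}--\eqref{eqn:f_sv_2}. Since $\tilde w_{i,j} = \delta_j^{1/2} w_{i,j}$ and $\tilde\chi_j = \delta_j^{-1/2} \chi_j$, formula \eqref{eqn:w_sv} rescales to
\begin{equation*}
\tilde w_{i,j} = \delta_j^{1/2} w_{i,j} = \int_0^T \delta_j^{-1/2} \chi_j(t) f_i(t)\,dt = \int_0^T \tilde\chi_j(t) f_i(t)\,dt,
\end{equation*}
which is \eqref{eqn:w_sv_2}, and \eqref{eqn:f_sv} becomes $f_i(t) = \sum_\ell w_{i,\ell} \chi_\ell(t) = \sum_\ell \delta_\ell^{-1/2} \tilde w_{i,\ell} \chi_\ell(t) = \sum_\ell \tilde w_{i,\ell} \tilde\chi_\ell(t)$, which is \eqref{eqn:f_sv_2}. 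The $v_i \leftrightarrow x_i$ relation \eqref{eqn:v_sv} is inherited verbatim from \Cref{prop:SVD_two_weights}.

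I do not anticipate any real obstacle here: once one identifies the isometry $\Delta^{1/2} : \mathbb{R}^s_\Delta \to \mathbb{R}^s$, everything reduces to bookkeeping. The only spot that might invite a slip is the adjoint computation for $U\Delta^{1/2}$ viewed as a map $\mathbb{R}^s \to \mathbb{R}^m_M$ (weighted codomain, unweighted domain), so the formula $(U\Delta^{1/2})^* = \Delta^{1/2} U^T M$ deserves to be written out explicitly to keep the weights straight.
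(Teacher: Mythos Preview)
Your argument is correct. The paper itself omits the proof, saying only that it is ``similar'' to that of \Cref{prop:SVD_two_weights}, which suggests the intended route is to rerun the direct computation with $K$ and $K^*$ using the orthonormal family $\{\tilde\chi_j\}$ in place of $\{\chi_j\}$; the orthonormality $(\tilde\chi_i,\tilde\chi_j)_{L^2(0,T)}=\delta_{ij}$ is what makes the $\Delta$-weight on $\mathbb{R}^s$ disappear in that approach.

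Your route is genuinely different and arguably cleaner: you factor $U\Delta = (U\Delta^{1/2})\Delta^{1/2}$ through the isometric isomorphism $\Delta^{1/2}:\mathbb{R}^s_\Delta\to\mathbb{R}^s$ and reduce directly to the already-proved \Cref{prop:SVD_two_weights}, then translate the $w_i\leftrightarrow f_i$ dictionary into the $\tilde w_i\leftrightarrow f_i$ dictionary by the rescaling $\tilde w_i=\Delta^{1/2}w_i$. This is essentially the content of the paper's subsequent \Cref{eqn:SVD_relationships} put to work here, so your proof has the added benefit of making that relationship do the heavy lifting rather than stating it separately afterward. The paper's implied approach has the minor advantage of being self-contained (no appeal to \Cref{prop:SVD_two_weights}), but at the cost of repeating essentially the same calculation. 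Your adjoint computation $(U\Delta^{1/2})^* = \Delta^{1/2}U^T M$ and the identity $(U\Delta)^* = U^T M$ are both correct, and the bookkeeping for \eqref{eqn:w_sv_2}--\eqref{eqn:f_sv_2} is accurate.
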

The proof is similar, and is omitted.  We note that the weighted characteristic functions $ \{ \tilde \chi_j \} \subset L^2(0,T) $ are an orthonormal set, i.e., $ ( \tilde \chi_i, \tilde \chi_j )_{L^2(0,T)} = \delta_{ij} $.  This leads to the removal of the weight on the space $ \mathbb{R}^s $ in the above result.

We also present the connection between the core SVDs of the two matrices $ U \Delta : \mathbb{R}^s_\Delta \to \mathbb{R}^m_M $ and $ U \Delta^{1/2} : \mathbb{R}^s \to \mathbb{R}^m_M $.
\begin{proposition}\label{eqn:SVD_relationships}
  Let $ U \in \mathbb{R}^{m \times s} $, and suppose $ M \in \mathbb{R}^{m \times m} $ and $ \Delta \in \mathbb{R}^{s \times s} $ are symmetric positive definite.  Then the core SVD of $ U \Delta : \mathbb{R}^s_\Delta \to \mathbb{R}^m_M $ is given by $ U \Delta = V \Sigma W^* $ if and only if the core SVD of $ U \Delta^{1/2} : \mathbb{R}^s \to \mathbb{R}^m_M $ is given by $ U \Delta^{1/2} = V \Sigma \tilde{W}^T $, where $ \tilde{W} = \Delta^{1/2} W $.
\end{proposition}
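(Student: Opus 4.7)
The plan is to verify the biconditional by a short algebraic manipulation in each direction, exploiting the factorization $\Delta = \Delta^{1/2}\Delta^{1/2}$ together with the fact that $\Delta^{1/2}$ is itself symmetric positive definite. The key observation driving everything is that multiplication by $\Delta^{1/2}$ converts $\Delta$-orthonormality into standard orthonormality and vice versa, while leaving the $V$-side of the decomposition untouched.

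For the forward direction, I would assume $U\Delta = V\Sigma W^{*}$ is a core SVD of $U\Delta : \mathbb{R}^{s}_{\Delta} \to \mathbb{R}^{m}_{M}$. Unpacking the definition, this means $V^{T}MV = I$, $W^{T}\Delta W = I$, $\Sigma$ is diagonal with strictly positive entries, and $U\Delta = V\Sigma W^{T}\Delta$. Right-multiplying this last identity by $\Delta^{-1/2}$ gives $U\Delta^{1/2} = V\Sigma W^{T}\Delta^{1/2} = V\Sigma(\Delta^{1/2}W)^{T} = V\Sigma \tilde{W}^{T}$. It remains to check $\tilde{W}$ has standard orthonormal columns, which follows from $\tilde{W}^{T}\tilde{W} = W^{T}\Delta^{1/2}\Delta^{1/2}W = W^{T}\Delta W = I$, while $V^{T}MV = I$ is inherited unchanged; together with the positivity of the diagonal of $\Sigma$, this is exactly a core SVD of $U\Delta^{1/2} : \mathbb{R}^{s} \to \mathbb{R}^{m}_{M}$.

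For the reverse direction, I would assume $U\Delta^{1/2} = V\Sigma \tilde{W}^{T}$ is a core SVD of $U\Delta^{1/2}$ with $\tilde{W}^{T}\tilde{W} = I$ and $V^{T}MV = I$, and set $W := \Delta^{-1/2}\tilde{W}$, so that $\tilde{W} = \Delta^{1/2}W$. Then $W^{T}\Delta W = \tilde{W}^{T}\Delta^{-1/2}\Delta\Delta^{-1/2}\tilde{W} = \tilde{W}^{T}\tilde{W} = I$, so $W$ is $\Delta$-orthonormal. Right-multiplying the decomposition by $\Delta^{1/2}$ gives $U\Delta = V\Sigma\tilde{W}^{T}\Delta^{1/2} = V\Sigma W^{T}\Delta = V\Sigma W^{*}$, completing the verification.

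There is essentially no obstacle in the argument; it is a direct computation. The only point requiring a word of care is the observation that the diagonal matrix $\Sigma$ is literally identical in both decompositions, so the number and values of the positive singular values match automatically, and the two decompositions qualify as \emph{core} SVDs in their respective weighted settings precisely because the orthonormality checks above are carried out against the appropriate inner products.
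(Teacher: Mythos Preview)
Your proposal is correct and follows essentially the same approach as the paper's proof: expand $W^{*}=W^{T}\Delta$, peel off a factor of $\Delta^{1/2}$ to pass between $U\Delta$ and $U\Delta^{1/2}$, and check that $\tilde W=\Delta^{1/2}W$ interchanges $\Delta$-orthonormality with standard orthonormality. The paper compresses both directions into a single biconditional sentence, while you spell them out separately, but the content is identical.
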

\begin{proof}
  We have $ U \Delta = V \Sigma W^* = V \Sigma W^T \Delta $ where $ V^T M V = I $ and $ W^T \Delta W = I $ if and only if $ U \Delta^{1/2} = V \Sigma W^T \Delta^{1/2} = V \Sigma \tilde{W}^T $, where $ \tilde{W} = \Delta^{1/2} W $, $ V^T M V = I $, and $ \tilde{W}^T \tilde{W} = I $.
\end{proof}
%

\section{Incremental SVD with weighted inner products}
\label{sec:incr_SVD_weights}

In \Cref{sec:cont_disc_POD}, we showed the continuous POD of a certain class of finite dimensional time varying data can be found exactly using the SVD of two different matrices with respect to different weighted inner products.  In this section, we consider incremental approaches to approximating both of these matrix SVDs.  Since incremental SVD algorithms involve approximation, it is possible that one of the two matrix SVDs is computed more accurately than the other.  We show in fact that this is not the case for a specific type of incremental SVD algorithm; specifically, the incremental algorithms for approximating these two matrix SVDs yield equivalent results.

We begin in \Cref{subsec:incr_SVD_two_weights} and follow a similar approach to our earlier work \cite{Fareed17} to develop an incremental SVD with two weighted inner products for $ U \Delta : \mathbb{R}^s_\Delta \to \mathbb{R}^m_M $.  The incremental SVD for $ U \Delta^{1/2} : \mathbb{R}^s \to \mathbb{R}^m_M $ only utilizes one weighted inner product and was developed in \cite{Fareed17}.  In \Cref{sec:equivalent} we show the incremental SVD algorithm for $ U \Delta^{1/2} : \mathbb{R}^s \to \mathbb{R}^m_M $ with one weighted inner product gives an equivalent result as the incremental SVD with two weighted inner products for $ U \Delta : \mathbb{R}^s_\Delta \to \mathbb{R}^m_M $.

Throughout this section we assume $ M \in \mathbb{R}^{m \times m} $ is symmetric positive definite, and $ \Delta \in \mathbb{R}^{s \times s} $ is given by $ \Delta = \mathrm{diag}(\delta_1, \ldots, \delta_s) $, where $ \delta_i > 0 $ for $ i = 1, \ldots, s $.  We assume we know the SVDs of the matrices $ U \Delta : \mathbb{R}^s_\Delta \to \mathbb{R}^m_M $ and $ U \Delta^{1/2} : \mathbb{R}^s \to \mathbb{R}^m_M $, and we focus on updating the SVDs when a new column is added to $ U $ and a new positive diagonal entry $ \delta_{s+1} $ is added to $ \Delta $.  Adding a new column to $ U $ and a new positive diagonal entry to $ \Delta $ corresponds to obtaining the value of the data $ u(t) $ in the new time interval $ t_{s+1} < t < t_{s+2} $.  As discussed in \cite{Fareed17}, the SVDs can be initialized using a single column of data, and then updated incrementally as new data becomes available.

We note that it is possible to further rescale the data in order to remove the weight matrix $ M $ from the inner product for the space $ \mathbb{R}^m_M $.  Since $ M $ is often not diagonal in applications, there are computational disadvantages to performing such a rescaling; see \cite{Fareed17} for a discussion of this issue.  Therefore, we do not consider this type of rescaling here.

\subsection{Incremental SVD with two weighted inner products}
\label{subsec:incr_SVD_two_weights}

Suppose an exact core SVD of $ U\Delta : \mathbb{R}_{\Delta}^{s}\longrightarrow \mathbb{R}_{M}^{m}$ is known, and the goal is to update the core SVD when a new column $ c \in \mathbb{R}^m_M $ is added to $ U $.  First, we prove that the exact core SVD can be updated exactly when no truncation is performed.
\begin{theorem}\label{thm:svd_exact_update}
Suppose $ U \Delta = V \Sigma W^* $ is the exact core SVD of $ U\Delta : \mathbb{R}_{\Delta}^{s} \to \mathbb{R}_{M}^{m}$, where $ V^{T} M V = I $ for $V \in \mathbb{R}^{m \times k}$, $ W^{T} \Delta W = I$ for $W \in \mathbb{R}^{s \times k}$, $ W^{*}  = W^{T} \Delta $, and $\Sigma \in \mathbb{R}^{k \times k}$.  Let $ c \in \mathbb{R}^{m}_M $ and define
\[
  h = c-VV^{*}c,  \quad  p = \| h \|_{M},  \quad
  Q=\begin{bmatrix}
  \Sigma & \delta_{s+1}^{1/2} V^{*}c\\
  0 & \delta_{s+1}^{1/2} p
  \end{bmatrix},
\]
where $ V^* = V^T M $.  If $ p > 0 $ and the standard core SVD of $ Q \in \mathbb{R}^{k+1 \times k+1} $ is given by
  \begin{equation}
  \label{eq:4}
  Q =V _{Q}\,\Sigma_{Q}\,W_{Q}^{T},
\end{equation}
then the core SVD of $ [ \,U  \, c \, ] \Delta_\mathrm{new} : \mathbb{R}_{\Delta_\mathrm{new}}^{s+1} \to \mathbb{R}_{M}^{m} $ is given by
\[
 [\,U \, c \,] \Delta_\mathrm{new} = V_\mathrm{new} \Sigma_{Q} W_\mathrm{new}^{*},\nonumber
\]
where
\[
 V_\mathrm{new} = [\, V \, j \,] ~V_{Q},  \quad  j = h/p,  \quad  W_\mathrm{new} = W_u W_{Q},  \quad  W_u = \begin{bmatrix} W & 0\\ 0 & \delta_{s+1}^{-1/2} \end{bmatrix},
\]
and
\[
W_\mathrm{new}^{*} = W_{u}^{T} \Delta_\mathrm{new},  \quad  \Delta_\mathrm{new} = \mathrm{diag}(\delta_1, \ldots, \delta_{s+1}).
\]
\end{theorem}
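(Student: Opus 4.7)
The plan is to reduce the problem to a direct application of \Cref{prop:2.3}. Writing the update in the form
$$[\,U\, c\,]\Delta_{\mathrm{new}} = [\,V\, j\,]\, Q\, W_u^{*},$$
the theorem follows immediately: $Q = V_Q \Sigma_Q W_Q^T$ is a standard core SVD by hypothesis, and $\Sigma_Q$ has positive diagonal entries inherited from the fact that $Q$ has a nonzero $(k+1,k+1)$ block $\delta_{s+1}^{1/2} p > 0$. Provided the outer factors $[\,V\, j\,]$ and $W_u$ have $M$-orthonormal and $\Delta_{\mathrm{new}}$-orthonormal columns respectively, \Cref{prop:2.3} delivers the desired core SVD with $V_{\mathrm{new}} = [\,V\, j\,] V_Q$, $\Sigma_{\mathrm{new}} = \Sigma_Q$, and $W_{\mathrm{new}} = W_u W_Q$.

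First I would verify the factorization by block multiplication. Using $j p = h = c - V V^* c$ gives
$$[\,V\, j\,] Q = \bigl[\, V \Sigma \;\; \delta_{s+1}^{1/2}(V V^* c + h)\,\bigr] = \bigl[\, V \Sigma \;\; \delta_{s+1}^{1/2} c\,\bigr],$$
and the block structure of $W_u$ together with $W_u^* = W_u^T \Delta_{\mathrm{new}}$ yields
$$W_u^{*} = \begin{bmatrix} W^T \Delta & 0 \\ 0 & \delta_{s+1}^{1/2} \end{bmatrix} = \begin{bmatrix} W^{*} & 0 \\ 0 & \delta_{s+1}^{1/2} \end{bmatrix}.$$
Multiplying the two block expressions and invoking $V \Sigma W^{*} = U \Delta$ then produces $[\,U\Delta \;\; \delta_{s+1} c\,] = [\,U\, c\,]\Delta_{\mathrm{new}}$.

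Next I would verify the two orthonormality conditions. For the columns of $[\,V\, j\,]$: the given relation $V^* V = V^T M V = I$ handles the leading block; for the new column, $j^T M j = \|h\|_M^2 / p^2 = 1$, and $V^T M j = V^* (c - V V^* c)/p = 0$ since $V^* V = I$. For the columns of $W_u$, a direct block computation gives
$$W_u^T \Delta_{\mathrm{new}} W_u = \begin{bmatrix} W^T \Delta W & 0 \\ 0 & \delta_{s+1}^{-1/2} \cdot \delta_{s+1} \cdot \delta_{s+1}^{-1/2} \end{bmatrix} = I_{k+1},$$
using $W^T \Delta W = I$.

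There is no serious obstacle here; the entire argument is bookkeeping once one recognizes that the two diagonal factors $\delta_{s+1}^{1/2}$ and $\delta_{s+1}^{-1/2}$ appearing in $Q$ and $W_u$ are precisely what is needed to balance the $\delta_{s+1}$ in $\Delta_{\mathrm{new}}$ so the update fits a single weighted SVD pattern. The only point requiring minor care is that the Hilbert adjoint notation $V^{*}$ and $W^{*}$ involves the correct weight matrices ($M$ for $V$, and $\Delta$ versus $\Delta_{\mathrm{new}}$ for $W$ versus $W_u$), which is why the scalar entries $\delta_{s+1}^{\pm 1/2}$ must be placed exactly as they are in the statement.
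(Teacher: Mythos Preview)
Your proposal is correct and follows essentially the same approach as the paper: both arguments establish the factorization $[\,U\;c\,]\Delta_{\mathrm{new}} = [\,V\;j\,]\,Q\,W_u^{*}$, verify that $[\,V\;j\,]$ has $M$-orthonormal columns and $W_u$ has $\Delta_{\mathrm{new}}$-orthonormal columns, and then invoke \Cref{prop:2.3}. The only cosmetic difference is that the paper starts from $[\,U\;c\,]\Delta_{\mathrm{new}}$ and factors it step by step, whereas you multiply the factored form out; your direct verification of $V^T M j = 0$ is also slightly more explicit than the paper's reference to \cite[Theorem~4.1]{Fareed17}.
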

\begin{proof}
By the definition of $ j $, we have $ c = V V^* c + jp $.  This gives
\begin{align*}
   [ \,U  \;\; c \, ] \Delta_\mathrm{new}  &=  [\, V \Sigma W^{*} \;\; \delta_{s+1} c \, ]\\
    &=  [ \, V \Sigma W^{*} \;\; \delta_{s+1} (VV^{*}c+jp)  \, ]\\
    &=  [\, V \, j\,] \left[\begin{array}{cc} \Sigma W^T \Delta &  \delta_{s+1} V^{*}c \\ 0 & \delta_{s+1} p  \end{array}\right]\\
  &=  [\, V \,j \,] \left[\begin{array}{cc} \Sigma & \delta_{s+1}^{1/2} V^{*}c \\ 0 & \delta_{s+1}^{1/2} p  \end{array}\right] \left[\begin{array}{cc} W & 0\\ 0 & \delta_{s+1}^{-1/2} \end{array}\right]^T  \left[\begin{array}{cc} \Delta & 0\\ 0 & \delta_{s+1} \end{array}\right]\\
  &=  [\, V \,j \,] \, V _{Q}\,\Sigma_{Q}\,W_{Q}^{T} \left[\begin{array}{cc} W & 0\\ 0 & \delta_{s+1}^{-1/2} \end{array}\right]^T  \left[\begin{array}{cc} \Delta & 0\\ 0 & \delta_{s+1} \end{array}\right]\\
  &=  ([\, V \,j \,] V _{Q})\,\Sigma_{Q}\, \left(\left[\begin{array}{cc} W & 0\\ 0 & \delta_{s+1}^{-1/2} \end{array}\right] W_{Q}\right)^T  \left[\begin{array}{cc} \Delta & 0\\ 0 & \delta_{s+1} \end{array}\right].
\end{align*}
Next, the same argument in the proof of Theorem 4.1 in \cite{Fareed17} shows $ [\, V \,j \,]^T M [\, V \,j \,] = I \in \mathbb{R}^{k+1 \times k+1} $.  Also, since $ W^{*} W = W^{T} \Delta W = I \in \mathbb{R}^{k \times k} $,
\begin{align*}
  \left[\begin{array}{cc} W & 0\\  0 & \delta_{s+1}^{-1/2} \end{array}\right]^{*} \left[\begin{array}{cc} W & 0\\  0 & \delta_{s+1}^{-1/2} \end{array}\right]& = \left[\begin{array}{cc} W & 0\\  0 & \delta_{s+1}^{-1/2} \end{array}\right]^{T} \left[\begin{array}{cc} \Delta & 0\\ 0 & \delta_{s+1} \end{array}\right] \left[\begin{array}{cc} W & 0\\  0 & \delta_{s+1}^{-1/2} \end{array}\right] =  \left[\begin{array}{cc} I & 0\\ 0 & 1 \end{array}\right].
\end{align*}
\Cref{prop:2.3} completes the proof.
\end{proof}

Next, we follow the implementation strategy in \cite[Section 4.2]{Fareed17} to develop the full algorithm.  We only provide a brief summary of various parts of the complete implementation, and refer to \cite[Section 4.2]{Fareed17} for more details and discussion.

\textbf{Initialization.}  We initialize the SVD using a nonzero column of data $ c $ by setting
$$
  \Sigma = \left\Vert \,c\,\right\Vert_{M} = (| c^{T} M c |)^{1/2},  \quad  V = c \Sigma^{-1},  \quad  W = \delta_1^{-1/2},  \quad  \Delta = \delta_1,
$$
where $\delta_1$ is the first time step.\footnote{Although $ M $ is symmetric positive definite, as in \cite[Section 4.2]{Fareed17} absolute values are used since sometimes round off errors may cause $ c^T M c $ to be small and negative.}

\textbf{Exact SVD update.}   Once we have an existing SVD of $ U \Delta : \mathbb{R}^s_\Delta \to \mathbb{R}^m_M $, to update the SVD we first compute $ p = \|c-VV^{*}c \|_{M} $ using the new column $ c $.  In \Cref{thm:svd_exact_update}, $ p = \|c-VV^{*}c \|_{M} $ is assumed to be positive in order to guarantee an exact SVD update.  In practice, we use \Cref{thm:svd_exact_update} for the SVD update only if $ \delta_{s+1}^{1/2} p \geq \mathrm{tol} $, for a given tolerance $ \mathrm{tol} $.

\textbf{Truncation I.}   If instead $ \delta_{s+1}^{1/2} p < \mathrm{tol} $, the final row of $ Q $ is set to zero.  Since $ \delta_{s+1} > 0 $, we set $ p = 0 $ and this implies $ c = V V^* c $.  Using a similar argument to part of the proof of \Cref{thm:svd_exact_update}, setting $ p = j = 0 $, and following a similar approach to \cite[Section 4.2]{Fareed17} provides the SVD update:
$$
V  \longrightarrow  V V_{Q_{(1:k,1:k)}},  \quad  \Sigma  \longrightarrow \Sigma_{Q_{(1:k,1:k)}},  \quad  W  \longrightarrow \begin{bmatrix} W & 0\\ 0 & \delta_{s+1}^{-1/2}\end{bmatrix}  W_{Q_{(:,1:k)}},
$$
where $ V_Q \Sigma_Q W_Q = Q $ is the SVD of $ Q $.  We note in this case the rank of the SVD does not increase.

\textbf{Orthogonalization.} To avoid a loss of orthogonality, we apply a modified $ M $-weighted Gram-Schmidt procedure with reorthogonalization to the columns of $ V $; see \cite[Algorithm 3]{Fareed17}.

\textbf{Truncation II.} To reduce the computational cost and storage, we keep only the singular values and corresponding singular vectors above a user specified tolerance, $ \mathrm{tol}_\mathrm{sv} $.

\textbf{Complete Implementation.}  The incremental SVD update algorithm for two weighted inner products is given in \Cref{algorithm:incrSVD_2weightedinner}.  This algorithm is used every time a new column of data is added.
\begin{algorithm}
\caption{Incremental SVD with two weighted inner products}
\label{algorithm:incrSVD_2weightedinner}
\begin{algorithmic}[1]
  \REQUIRE  $ V \in \mathbb{R}^{m \times k} $, $ \Sigma \in \mathbb{R}^{k \times k} $, $ W \in \mathbb{R}^{\ell \times k}$, $ c \in \mathbb{R}^m $, $ M \in \mathbb{R}^{m \times m} $, $ \Delta \in \mathbb{R}^{\ell \times \ell} $, $\delta > 0 $, $ \mathrm{tol} $, $ \mathrm{tol}_\mathrm{sv} $,
  \\
  \smallskip
  \% Prepare for SVD update
  \smallskip
  \STATE  $d=V^{T} M c$,\, $ p = \mathrm{sqrt}( |( c-Vd )^{T} M ( c-Vd )| )$,\, $ \alpha = \delta^{1/2} $ 
    \IF{$(p \, \alpha < \mathrm{tol})$} 
  \STATE  $ Q = \begin{bmatrix}
        \Sigma & d \,\alpha \end{bmatrix} $
    \ELSE
        \STATE  $ Q = \begin{bmatrix}
        \Sigma & d \,\alpha\\
        0 & p \,\alpha
   \end{bmatrix} $
    \ENDIF
  \STATE  $[\,V_{Q},\Sigma_{Q},W_{Q}\,]=\mathrm{svd}(Q)$ 
\STATE  $ \Delta_\mathrm{new} = \begin{bmatrix}
        \Delta & 0\\
        0 & \delta
   \end{bmatrix} $
  \\
  \smallskip
  \% SVD update
  \smallskip
  \IF{$(p \, \alpha < \mathrm{tol})$ or $(k \geq  m)$}
  \STATE $ V = V V_{Q_{(1:k,1:k)}}$, $\Sigma = \Sigma_{Q_{(1:k,1:k)}}$, $ W =\begin{bmatrix} W & 0\\ 0 & \delta^{-1/2} \end{bmatrix} W_{Q_{(:,1:k)}} $
  \ELSE
  \STATE $ j=(c-Vd) / p $
  \STATE $ V= [V \, j] V_{Q} $, $ \Sigma = \Sigma_{Q}$,  $ W = \begin{bmatrix}
    W & 0\\
    0 & \delta^{-1/2}
    \end{bmatrix} W_{Q} $
  \STATE $ k=k+1 $
  \ENDIF
  \\
  \smallskip
  \% Orthogonalize if necessary (see Algorithm 3 in \cite{Fareed17})
  \smallskip
  \IF{( $ | V_{(:,\mathrm{end})}^T M V_{(:,1)} | > \mathrm{min}(\mathrm{tol} , \mathrm{tol}\times m) $)}
  \STATE $ V = \mathrm{modifiedGSweighted}(V,M) $
  \ENDIF
  \\
  \smallskip
  \% Neglect small singular values: truncation
  \smallskip
  \IF{$(\Sigma_{(r,r)}  > \mathrm{tol}_\mathrm{sv})$ and $(\Sigma_{(r+1,r+1)} \leq \mathrm{tol}_\mathrm{sv})$}     
  \STATE $ \Sigma = \Sigma_{(1:r,1:r)}$, \quad  $V = V_{(:,1:r)}$, \quad  $W = W_{(:,1:r)}$  
  \ENDIF
  \RETURN  $ V $, $ \Sigma $, $ W $, $ \Delta_\mathrm{new} $
\end{algorithmic}
\end{algorithm}
%

\subsection{Incremental SVD with one weighted inner product is equivalent}
\label{sec:equivalent}

Next, we consider the incremental SVD algorithm for $ U \Delta^{1/2} : \mathbb{R}^s \to \mathbb{R}^m_M $ with one weighted inner product from \cite{Fareed17}.  However, instead of recalling that algorithm here, for brevity we simply modify \Cref{algorithm:incrSVD_2weightedinner} to work for the present case.

Since the space $ \mathbb{R}^s $ does not utilize a weighted inner product, we start with an SVD $ U \Delta^{1/2} = V \Sigma \tilde{W}^T $, where $ V^T M V = I $ and $ \tilde{W}^T \tilde{W}= I $.  Recall from \Cref{eqn:SVD_relationships} that $ \tilde{W} = \Delta^{1/2} W $, where $ U \Delta = V \Sigma W^* $ is the SVD of $ U \Delta : \mathbb{R}^s_\Delta \to \mathbb{R}^m_M $.

Let $ c_j $ denote the $ j $th column of $ U $, and let $ \Delta = \mathrm{diag}(\delta_1, \ldots, \delta_s) $, as before.  Then the matrix $ U \Delta^{1/2} $ has $ j $th column $ \tilde{c}_j = \delta_j^{1/2} c_j $.  To update the SVD of $ U \Delta^{1/2} : \mathbb{R}^s \to \mathbb{R}^m_M $, we simply use \Cref{algorithm:incrSVD_2weightedinner} with $ (\Delta,\delta,c,W) $ replaced by $ (I,1,\tilde{c},\tilde{W}) $ in the algorithm.  Since the $ j $th column of $ U $ is $ \tilde{c}_j = \delta_j^{1/2} c_j $ and the matrix $ Q $ depends only on $ V $ and $ \tilde{c} $, it is easily checked that the algorithm produces the same $ V $ and the same $ \Sigma $ as produced by \Cref{algorithm:incrSVD_2weightedinner} in the two weighted inner product case.  Furthermore, it is also easily checked that the $ \tilde{W} $ produced by the algorithm still satisfies $ \tilde{W} = \Delta^{1/2} W $, where $ W $ is the update produced by \Cref{algorithm:incrSVD_2weightedinner} in the two weighted inner product case.

Therefore, the two incremental SVD approaches yield equivalent results.

%
\section{Conclusion}
\label{sec:conclusion}

We revisited our earlier work \cite{Fareed17} on an incremental POD algorithm for PDE simulation data.  For the case of time varying data, in \cite{Fareed17} we developed an approximate Riemann sum approach to arrive at a discrete time POD problem and an incremental POD algorithm.  In this work, we considered an alternative viewpoint to develop and analyze incremental POD approaches for time varying data.   We considered piecewise constant in time data taking values in a Hilbert space, where each data snapshot is expanded in a fixed basis.  We first showed that the POD of this data is exactly determined by the SVD of two different matrices with respect to different weighted inner products.  The two different SVDs come from two different ways of expressing the piecewise constant in time data.  Next, we developed incremental SVD algorithms for each case.  Since the incremental algorithms compute approximate SVDs, it was possible that one approach could be more accurate than the other; however, we showed that the incremental SVD algorithms produce equivalent results.  Therefore, the two different ways of expressing the data did not lead to different incrementally computed POD modes for the data.

One benefit of this result is that the error analysis of the discrete time incremental POD algorithm (with one weighted inner product) in \cite{FareedSingler18pp} is directly applicable to continuous time case, assuming the data is piecewise constant in time and is expressed using the weighted characteristic functions as in \eqref{eqn:fin_dim_data2}-\eqref{eqn:char_fncs_on}.  

Furthermore, we note that it may be possible to extend the incremental SVD approach developed here for the case of two weighted inner products to treat time varying data that is not piecewise constant in time.  We leave this to be considered in the future.


\bibliographystyle{plain}
\bibliography{incremental_POD_continuous}

\end{document}